\let\subset\subseteq 
\let\eps\varepsilon
\let\epsilon\varepsilon
\let\rho\varrho
\def\bigO{\mathcal{O}}
\def\clqed{\hfill\scalebox{.6}{$\Box$}}
\def\itmit#1{\rm({\it #1\,})}
\def\rom{\itmit{\roman{*}}}
\def\abc{\itmit{\alph{*}}}
\newtheorem{theorem}{Theorem}
\newtheorem{lemma}[theorem] {Lemma}
\theoremstyle{remark}
\newcommand{\oldqed}{}
\newcommand{\EMAIL}[1]{  \textit{E-mail}: \texttt{#1}} 
\newcommand{\By}[2]{\overset{\mbox{\tiny{#1}}}{#2}}
\newcommand{\ByRef}[2]{   \By{\eqref{#1}}{#2} }
\newcommand{\leByRef}[1]{ \ByRef{#1}{\le} }
\newcommand{\geByRef}[1]{ \ByRef{#1}{\ge} }
\newcommand{\Tur}[2]{\mathrm{T}_{#1}({#2})} 
\newcommand{\tur}[2]{t_{#1}({#2})}          
\newcommand{\turm}[3]{t_{#1}({#2,#3})}
\newcommand{\Turm}[3]{\mathcal{T}_{#1}({#2,#3})}
\title{An extension of Tur\'an's Theorem, uniqueness and stability}
  \author[P. Allen]{Peter Allen*}
  \author[J. B\"ottcher]{Julia B\"ottcher*}
  \thanks{
    *
    Department of Mathematics,
    London School of Economics,
    Houghton Street,
    London, WC2A 2AE, UK.
    \EMAIL{p.d.allen|j.boettcher@lse.ac.uk}}
  \author[J. Hladk\'y]{Jan Hladk\'y\dag}
  \thanks{
    \dag\ 
    DIMAP and Mathematics Institute,
    University of Warwick,
    Coventry, CV4~7AL, UK. The author is an EPSRC Research Fellow.
    \EMAIL{honzahladky@gmail.com}}
  \author[D. Piguet]{Diana Piguet\ddag}
\thanks{\ddag\ 
New Technologies for Information Society,
University of West Bohemia,
Pilsen, Czech Republic.
\EMAIL{piguet@ntis.zcu.cz}}
  \thanks{
    PA, JH, and DP were supported by DIMAP, EPSRC award
    EP/D063191/1.
    PA was partially supported by FAPESP (Proc.~2010/09555-7), and
    JB by FAPESP (Proc.~2009/17831-7).
    PA and JB are grateful to NUMEC/USP, N\'ucleo de Modelagem Estoc\'astica e
    Complexidade of the University of S\~ao Paulo, for supporting this
    research. The research leading to this result has received funding from the
    European Union's Seventh Framework Programme (FP7/2007-2013) under grant
    agreement no.\ PIEF-GA-2009-253925.}
    \date{\today}
\date{\today}
\begin{document}
\begin{abstract}
  We determine the maximum number of edges of an $n$-vertex graph $G$ with
  the property that none of its $r$-cliques intersects a fixed set
  $M\subset V(G)$.  For $(r-1)|M|\ge n$, the $(r-1)$-partite Tur\'an graph
  turns out to be the unique extremal graph. For $(r-1)|M|<n$, there is a
  whole family of extremal graphs, which we describe explicitly. In
  addition we provide corresponding stability results.
\end{abstract}

\maketitle

\section{Introduction}

Tur\'an's Theorem~\cite{Tur}, whose proof marks the beginning of Extremal
Graph Theory, determines the maximum number of edges of $n$-vertex graph
without a copy of the $r$-clique $K_r$. It turns out that the \emph{unique}
extremal graph for this problem is the \emph{Tur\'an graph} $\Tur{r-1}{n}$,
that is, the complete balanced $(r-1)$-partite graph on $n$-vertices. We
write $\tur{r-1}{n}$ to denote the number of edges of~$\Tur{r-1}{n}$.

Tur\'an's Theorem is a primal example of a \emph{stable} result: The
Erd\H{o}s-Simonovits stability theorem~\cite{E68,S68} asserts that any
$n$-vertex $K_r$-free graph with almost $\tur{r-1}{n}$ edges looks very
similar to $\Tur{r-1}{n}$. In order to make this more precise we need the
following definition.  We say that an $n$-vertex graph $G$ is
\emph{$\eps$-close} to a graph $H$ on the same vertex set if the \emph{edit distance}\footnote{Edit distance is a well-studied concept in graph theory, see for example the introduction of~\cite{Martin}.} between $G$ and $H$ is at most $\eps n^2$, that is, if $H$ can be
obtained from $G$ by editing (deleting/inserting) at most $\eps n^2$ edges
and relabelling the vertices. In this case we also say that $G$ is
\emph{$(\eps n^2)$-near} to $H$.

\begin{theorem}[Erd\H{o}s~\cite{E68} \&
Simonovits~\cite{S68}]\label{thm:BasicStability} Suppose that $r\geq 3$ and $\eps^*>0$ are given. Then there exists
$\gamma^*>0$ such that each $\ell$-vertex graph $G$ with no $K_r$ and
$e(G)>\tur{r-1}{\ell}-\gamma^* \ell^2$ is $\epsilon^*$-close to
$\Tur{r-1}{\ell}$.
\end{theorem}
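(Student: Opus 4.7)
I would prove the statement by induction on~$r$, with a hierarchy $\gamma^* \ll \delta \ll \eps^*$ of parameters.

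\emph{Base case $r=3$.} Here $G$ is triangle-free with $e(G) \ge \ell^2/4 - \gamma^* \ell^2$, and the target is $\Tur{2}{\ell} = K_{\lceil\ell/2\rceil,\lfloor\ell/2\rfloor}$. Pick a vertex $v$ with $d(v) \ge 2e(G)/\ell \ge \ell/2 - 2\gamma^* \ell$. Triangle-freeness makes $A := N(v)$ independent; writing $B := V(G) \setminus A$, we have
\[
 e(G) \ =\ e(A,B) + e(G[B]) \ \le\ |A|\,|B| + |B|^2/4.
\]
Comparing with the lower bound on $e(G)$ forces $|A| \approx |B| \approx \ell/2$, $e(A,B)$ close to $|A|\,|B|$, and $e(G[B])$ very small. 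This yields the required $\eps^*$-closeness to $\Tur{2}{\ell}$.

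\emph{Inductive step $r\ge 4$.} Assume the theorem for $r-1$, with a stability constant $\gamma^*(r-1,\delta)$ available for any target closeness $\delta$. The central claim is that one can find a vertex~$v$ whose (necessarily $K_{r-1}$-free) neighbourhood satisfies
\[
 d(v) \ \ge\ \tfrac{r-2}{r-1}\,\ell - \sqrt{\gamma^*}\,\ell \qquad\text{and}\qquad e(G[N(v)]) \ \ge\ \tur{r-2}{d(v)} - \delta\, d(v)^2.
\]
The degree bound holds for all but $o(\ell)$ vertices by a Markov-type argument on $d(v) - \tfrac{r-2}{r-1}\ell$ together with the near-extremal edge count. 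For the edge bound, I would use the identity $\sum_v e(G[N(v)]) = 3\, K_3(G)$ combined with a supersaturation estimate (e.g.\ via Kruskal--Katona) showing that a $K_r$-free graph with near-Tur\'an edge count contains close to the Tur\'an count of triangles; averaging then produces a~$v$ meeting the bound, and since each ``bad'' set has size $o(\ell)$, a vertex meeting both conditions exists. Applying the inductive hypothesis to $G[N(v)]$ yields a near-balanced partition $N(v) = U_1 \dcup \dots \dcup U_{r-2}$ with at most $\delta d(v)^2$ edges inside parts.

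\emph{Extension and conclusion.} Set $U_0 := V(G) \setminus N(v)$; I claim $\{U_0, U_1, \dots, U_{r-2}\}$ is the required partition. Near-balance is immediate from the degree bound on~$v$ and the balance of the $U_i$. For few internal edges: for each $w \in U_0$ the set $N(w) \cap N(v)$ is $K_{r-1}$-free, so by the near-partition structure of $G[N(v)]$ the vertex~$w$ must essentially avoid one part $U_{i(w)}$; summing over $w$ and comparing with the global lower bound on $e(G)$ forces $e(G[U_0])$ plus the cross-edge deficiencies to total at most $\eps^* \ell^2 / 2$, yielding $\eps^*$-closeness to $\Tur{r-1}{\ell}$. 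The main obstacle is the simultaneous control required in the vertex selection: a maximum-degree vertex may be too far from the balanced degree $\tfrac{r-2}{r-1}\ell$, so one must combine averaging over triangle counts with a degree pigeonhole. Careful propagation of error parameters through the extension step, and in particular handling the ``bad'' vertices in $U_0$ that fail to avoid any single $U_i$, then completes the proof.
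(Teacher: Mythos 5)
First, a point of reference: the paper does not prove Theorem~\ref{thm:BasicStability} at all --- it is quoted as a known result of Erd\H{o}s and Simonovits and used as a black box --- so there is no in-paper proof to compare against. Your induction-on-$r$ skeleton (find a high-degree vertex, apply the $(r-1)$-case to its $K_{r-1}$-free neighbourhood, absorb the complement as the last colour class) is indeed one of the standard routes to this theorem, but as written it has two genuine gaps.

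In the base case the inequality $e(G)\le |A|\,|B|+|B|^2/4$ does \emph{not} force $e(G[B])$ to be small: with $|A|,|B|\approx\ell/2$ the right-hand side is about $5\ell^2/16$, comfortably above the lower bound $\ell^2/4-\gamma^*\ell^2$, so nothing is forced. You need the sharper accounting that comes from choosing $v$ of \emph{maximum} degree: since $A=N(v)$ is independent, $e(G)=e(A,B)+e(B)=\sum_{u\in B}\deg(u)-e(B)\le|A|\,|B|-e(B)$, which does force $e(B)\le\gamma^*\ell^2$ and $|A|\,|B|\ge\ell^2/4-\gamma^*\ell^2$. A related problem infects the inductive step, where you seek a vertex that is simultaneously of near-extremal degree and has a near-extremal neighbourhood via the identity $\sum_v e(G[N(v)])=3K_3(G)$ plus ``supersaturation via Kruskal--Katona''. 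Kruskal--Katona bounds the number of triangles from \emph{above} given the number of edges; what you need is a \emph{lower} bound on the triangle count of a $K_r$-free graph of near-Tur\'an density, which is the much harder Razborov--Nikiforov--Reiher minimum-clique-density problem and is not established by your sketch. The detour is avoidable: take a single maximum-degree vertex $v$, set $A:=N(v)$ and $B:=V\setminus A$; then $e(G)=e(A)+\sum_{u\in B}\deg(u)-e(B)\le\tur{r-2}{|A|}+|A|\,|B|-e(B)$, and the elementary fact that $\tur{r-2}{a}+a(\ell-a)\le\tur{r-1}{\ell}$, with near-equality only when $a\approx\tfrac{r-2}{r-1}\ell$, simultaneously pins down $|A|$, forces $e(B)\le\gamma^*\ell^2$, and shows $e(G[A])\ge\tur{r-2}{|A|}-\bigO(\gamma^*)\ell^2$. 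At that point your inductive hypothesis and your (essentially correct, though sketchy) cleanup of the exceptional vertices of $B$ that fail to avoid a single part finish the argument.
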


In fact, Erd\H{o}s and Simonovits both proved more general statements, allowing any fixed $r$-partite graph $H$ in place of $K_r$. Moreover, in more recent years strengthenings have been proved, for example that most vertices of any~$G$ as in Theorem~\ref{thm:BasicStability} are in an induced $(r-1)$-partite graph,~\cite{NikiPlacate1}. There are also further generalisations, such as obtaining the same conclusion as in Theorem~\ref{thm:BasicStability} while allowing the size of the forbidden subgraph $H$ to depend on $v(G)$,~\cite{NikiPlacate2}.

A main motivation for proving stability results for extremal statements is that they are often useful in applications where the original extremal statement would not suffice. This is for example the case when the Szemer\'edi Regularity Lemma (see, e.g., the survey~\cite{KS96}) is used. A prominent example of such an application is the enumeration result of Balogh, Bollob\'as and Simonovits~\cite{BalBolSim} giving a precise count of $H$-free graphs. It is worth observing that in most applications the `basic' stability theorem of Erd\H{o}s and Simonovits, Theorem~\ref{thm:BasicStability}, suffices.

Our goal is to extend Tur\'an's Theorem, by determining the maximum
number of edges in an $n$-vertex graph $G$ such that no copy of $K_r$
in~$G$ touches a fixed vertex set $M\subset V(G)$ of size
$m$. It turns out that for $(r-1)m\ge n$ the unique extremal graph is
$\Tur{r-1}{n}$. 
The case $(r-1)m< n$ is more complicated. In particular, there is a whole
family of extremal graphs, which we describe in Section~\ref{sec:Extremal} below. In both cases we shall
denote the (family of) extremal graphs by $\Turm{r-1}{n}{m}$, and their
number of edges by
\begin{equation}\label{eq:turm}
  \turm{r-1}{n}{m}:=\begin{cases}
    \tur{r-1}{n}\;,
    & \text{if $n\le(r-1)m$}\,, \\
    \binom{n}{2} - nm + (r-1)\binom{m+1}{2}\;,
    & \text{otherwise}\,.
  \end{cases}
\end{equation}

Our two main results are as follows.

\begin{theorem}\label{thm:TuranExt} 
  Given $r\geq 3$ and $m\leq n$, let $G$ be any $n$-vertex graph and
  $M\subset V(G)$ contain $m$ vertices, such that no copy of $K_r$ in $G$
  intersects $M$. Then 
  \begin{enumerate}[label=\abc]
    \item\label{thm:TuranExt:bound} $e(G)\le\turm{r-1}{n}{m}$, and 
    \item\label{thm:TuranExt:unique} if $e(G)=\turm{r-1}{n}{m}$ then $G\in \Turm{r-1}{n}{m}$.
  \end{enumerate}
\end{theorem}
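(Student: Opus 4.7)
The plan is to prove both parts of Theorem~\ref{thm:TuranExt} by induction on $n$. The base case $n=m$ is immediate: here $M=V(G)$, so the hypothesis forces $G$ to be $K_r$-free, and Tur\'an's theorem supplies both the bound $\tur{r-1}{n}$ and the uniqueness of $\Tur{r-1}{n}$. This agrees with~\eqref{eq:turm} since $(r-1)m\ge n$ puts us in its first branch. The main structural observation, which drives the inductive step, is that for each $v\in M$ the link $G[N(v)]$ must be $K_{r-1}$-free --- any $K_{r-1}$ inside $N(v)$ together with $v$ would produce a $K_r$ meeting $M$ --- so Tur\'an's theorem applied inside the link gives $e(G[N(v)])\le \tur{r-2}{d(v)}$, and this will be my main quantitative tool.

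For the inductive step I distinguish the two branches of~\eqref{eq:turm}. When $(r-1)m<n$, I plan to remove a minimum-degree vertex $v\in M$; the smaller graph satisfies the hypothesis with parameters $(n-1,m-1)$, both in the second branch, and a direct computation yields $\turm{r-1}{n}{m}-\turm{r-1}{n-1}{m-1}=(r-2)m$, so it suffices to find $v\in M$ with $d(v)\le(r-2)m$. When $(r-1)m\ge n$, I instead remove a low-degree vertex of $V\setminus M$ to stay in the first branch; the target becomes $d(v) \le \tur{r-1}{n}-\tur{r-1}{n-1}$, essentially the minimum degree of $\Tur{r-1}{n}$. Establishing the existence of such a low-degree vertex whenever $e(G)\ge\turm{r-1}{n}{m}$ is the crux of the argument: I argue by contradiction, summing the link-Tur\'an estimate over $v\in M$ together with the double-count $\sum_{xy\in E(G)}|M\cap N(x)\cap N(y)|=\sum_{v\in M}e(G[N(v)])$ and the handshake identity, to force $e(G)<\turm{r-1}{n}{m}$.

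Part~\ref{thm:TuranExt:unique} is then handled by carrying the equality case through the induction: if $e(G)=\turm{r-1}{n}{m}$ then $G-v$ is itself extremal on $n-1$ vertices, so by the inductive uniqueness statement it belongs to the appropriate family $\Turm{r-1}{n-1}{\cdot}$; it then remains to classify the possible attachments of $v$ to $G-v$ that preserve extremality and show they reproduce exactly $\Turm{r-1}{n}{m}$. I expect the main obstacle to be precisely this classification --- together with the boundary analysis at $n\approx(r-1)m$ where the two branches of~\eqref{eq:turm} meet and the rule for which vertex to remove must switch. Because the second-branch family $\Turm{r-1}{n}{m}$ is genuinely rich (described explicitly in Section~\ref{sec:Extremal}), ruling out spurious extremal attachments and verifying consistency across the two branches will likely require a careful structural case analysis rather than any single clean inequality.
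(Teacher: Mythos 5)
There is a genuine gap in the second-branch inductive step, and it sits exactly at the feature that makes this theorem delicate: the sporadic vertices. You claim that whenever $(r-1)m<n$ and $e(G)\ge\turm{r-1}{n}{m}$ there must exist $v\in M$ with $d(v)\le(r-2)m$, and you propose to establish this by a link-counting contradiction. The claim is false. Take the member of $\Turm{r-1}{n}{m}$ obtained from $G_r(n,M)$ by deleting $r-1$ vertices of $Y$ (possible whenever $n\ge(r-1)(m+1)$), so that all $r-1$ sporadic vertices $v_1,\ldots,v_{r-1}$ survive. A vertex of $M$ lying in $V_i$ is adjacent to all of $V_j$ for $j\neq i$ and to the $r-2$ sporadic vertices $v_j$ with $j\neq i$, and to nothing else; hence it has degree exactly $(r-2)(m+1)>(r-2)m$, while the graph has exactly $\turm{r-1}{n}{m}$ edges. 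So no counting argument can force $e(G)<\turm{r-1}{n}{m}$ from the hypothesis $\min_{v\in M}d(v)>(r-2)m$, and your induction fails to launch on precisely the extremal graphs you must eventually classify. (This is the trap the authors flag in their footnote: they themselves initially missed the sporadic constructions.)

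A similar existence problem afflicts your first branch: if $G=\Tur{r-1}{n}$ with $(r-1)\nmid n$ and $M$ contains the unique largest colour class, then every vertex of $V\setminus M$ has degree strictly greater than $\tur{r-1}{n}-\tur{r-1}{n-1}$, so the low-degree vertex you want to delete need not exist there either; this is the usual reason the classical induction for Tur\'an's theorem deletes a whole $K_{r-1}$ rather than a single vertex. For comparison, the paper avoids single-vertex induction entirely: in Case I it greedily removes maximal cliques of order at least $r$ (which are forced to avoid $M$) and combines Tur\'an's theorem on the remainder with a careful count (Lemma~\ref{lem:largem}); in Case II it first symmetrises low-degree vertices outside $M$ (Lemma~\ref{lem:push}), then explicitly transforms away would-be sporadic vertices before bounding $|N(M)\setminus M|$ by $(r-2)m$ (Lemma~\ref{lem:boundX}) and invoking Case I on $M$ together with its neighbourhood; the sporadic vertices are recovered at the end via Lemma~\ref{lem:Seq}. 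Any repair of your approach would need a comparable mechanism for detecting and handling the up to $r-1$ exceptional vertices with large degree into $M$.
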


Theorem~\ref{thm:TuranExt}\ref{thm:TuranExt:unique} states that the graphs
$\Turm{r-1}{n}{m}$ we construct below are the only extremal graphs.
The following theorem provides a corresponding stability result.

\begin{theorem}\label{thm:TuranExt_stab}
  Given $r\geq 3$ and $\eps>0$ there exists $\gamma>0$ such that the following holds. Let $m\leq n$, let $G$ be any $n$-vertex graph and $M\subset V(G)$ contain $m$ vertices, such that no copy of $K_r$ in $G$ intersects $M$.  If $e(G)> \turm{r-1}{n}{m}-\gamma n^2$, then $G$ is $\eps$-close to a graph from $\Turm{r-1}{n}{m}$ in which no copies of~$K_r$ intersect~$M$.
\end{theorem}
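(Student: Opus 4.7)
My plan is to deduce the stability theorem from the exact Theorem~\ref{thm:TuranExt} via the Szemer\'edi Regularity Lemma, upgrading exact uniqueness to approximate closeness through a compactness argument on a reduced graph of bounded size.

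Fix a parameter hierarchy $\gamma \ll \gamma_0 \ll d \ll \eps' \ll \eps,\, 1/r$, with $\gamma_0$ chosen via the compactness step below. Apply a variant of the Regularity Lemma (refining the bipartition $\{M, V(G)\setminus M\}$) with parameter $\eps'$, obtaining an $\eps'$-regular partition $V(G) = V_1\dcup\cdots\dcup V_t$ with $t\le T_0(\eps',r)$ in which each cluster lies entirely inside $M$ or entirely outside~$M$. Form the reduced graph~$R$ on $[t]$ by joining $i,j$ precisely when $(V_i, V_j)$ is $\eps'$-regular with density at least $d$, and set $M^R := \{i : V_i \subseteq M\}$, so $|M^R|/t = m/n \pm O(1/t)$. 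A standard density-of-edges calculation, combined with the hypothesis $e(G) > \turm{r-1}{n}{m} - \gamma n^2$ and the formula~\eqref{eq:turm}, yields $e(R) \ge \turm{r-1}{t}{|M^R|} - \gamma_0 t^2$.

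The Counting Lemma for regular partitions then forces $R$ to have no $K_r$ intersecting $M^R$: otherwise one would find a $K_r$-copy in $G$ with one vertex in each of $r$ prescribed clusters (one of which lies entirely in~$M$), producing a $K_r$ that meets $M$ and contradicting the hypothesis. Combined with Theorem~\ref{thm:TuranExt}\ref{thm:TuranExt:bound}, we obtain $\turm{r-1}{t}{|M^R|} - \gamma_0 t^2 \le e(R) \le \turm{r-1}{t}{|M^R|}$, so $R$ is within $\gamma_0 t^2$ edges of extremal. Since $t$ is bounded, only finitely many triples $(t, |M^R|, R)$ arise, and a standard compactness argument---applied separately in each of the two regimes of~\eqref{eq:turm}---converts the uniqueness part Theorem~\ref{thm:TuranExt}\ref{thm:TuranExt:unique} into the quantitative form: for every $\delta>0$ there is $\gamma_0 = \gamma_0(\delta, \eps', r) > 0$ such that any such $R$ is at edit distance at most $\delta t^2$ from some $\tilde T \in \Turm{r-1}{t}{|M^R|}$ in which no $K_r$ intersects $M^R$. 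I apply this with $\delta \ll \eps$.

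Finally, lift $\tilde T$ back to a graph $T$ on $V(G)$ by blowing up each vertex $i\in[t]$ to the cluster $V_i$; since clusters in $M^R$ are contained in $M$ and $\sum_{i \in M^R}|V_i|$ matches $m$ up to $O(n/t)$, a trivial rounding adjustment produces $T \in \Turm{r-1}{n}{m}$ with no $K_r$ intersecting $M$. Combining the $\delta t^2$ discrepancy between $R$ and $\tilde T$ with the usual Regularity bookkeeping (errors within clusters, across irregular pairs, and across low-density pairs) bounds the edit distance from $G$ to $T$ by $(\delta + d + 2\eps' + O(1/t))\,n^2 \le \eps n^2$, as required. The main obstacle is the compactness step on $R$: it must be executed uniformly in the two regimes $(r-1)|M^R| \gtrless t$ of~\eqref{eq:turm}, with particular care near the boundary $|M^R|/t \approx 1/(r-1)$, where small fluctuations can swap the regime; I expect this to be handled by noting that the two branches of~\eqref{eq:turm} agree continuously at the boundary, so the extremal families $\Turm{r-1}{t}{|M^R|}$ (as described in Section~\ref{sec:Extremal}) can be analysed in parallel.
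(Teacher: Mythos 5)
Your route (regularity lemma plus a compactness argument on the reduced graph) is genuinely different from the paper's, but it contains a fatal circularity in the compactness step. The uniqueness statement Theorem~\ref{thm:TuranExt}\ref{thm:TuranExt:unique} only pins down graphs whose edge count equals $\turm{r-1}{t}{|M^R|}$ \emph{exactly}. Hence the compactness argument over the finitely many graphs on at most $T_0$ vertices can only conclude anything about $R$ when the deficit $\gamma_0 t^2$ is less than one edge, i.e.\ it supplies a $\gamma_0$ no larger than (roughly) $1/T_0^2$: for any larger deficit there could, a priori, be reduced graphs that are far from every member of $\Turm{r-1}{t}{|M^R|}$, and ruling them out \emph{is} the stability statement for $t$-vertex graphs, not a consequence of uniqueness. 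On the other hand, the passage from $G$ to $R$ unavoidably discards the edges inside clusters, in irregular pairs and in pairs of density below $d$, so the best deficit you can certify for $R$ is of order $(d+\eps'+1/t)\,t^2$. Since $T_0=T_0(\eps')\ge 1/\eps'$, you would need $d+\eps'\le\gamma_0\le 1/T_0^2\le\eps'^2$, which no choice of parameters satisfies; your own hierarchy already betrays this, as $\gamma_0\ll d$ is incompatible with losing $dt^2/2$ edges to low-density pairs. In short, the reduction replaces the stability problem for $n$-vertex graphs by the identical stability problem for $t$-vertex graphs with a \emph{constant} relative deficit, and compactness cannot bridge that. (A secondary, fixable issue: blowing up a member of $\Turm{r-1}{t}{|M^R|}$ that contains sporadic vertices does not yield a member of $\Turm{r-1}{n}{m}$ and in fact creates copies of $K_r$ meeting $M$, since the construction tolerates at most one sporadic vertex per class; this would need the $O(n^2/t)$ correction you wave at to be made explicit.)

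For contrast, the paper proves stability without regularity. In Case~I ($n\le(r-1)m$) it greedily removes maximum cliques of order at least $r$, uses the counting bound of Lemma~\ref{lem:largem} to show that near-extremality forces the removed set to have $o(n)$ vertices, and then applies the Erd\H{o}s--Simonovits stability theorem (Theorem~\ref{thm:BasicStability}) to the remaining $K_r$-free graph; this imports a genuine stability input rather than trying to manufacture one from uniqueness. In Case~II it symmetrises low-degree vertices (Lemma~\ref{lem:push}), bounds the neighbourhood of $M$ (Lemma~\ref{lem:boundX}), and applies Case~I to $G[X'\cup M]$, a set of exactly $(r-1)m$ vertices. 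If you want to salvage a regularity-based proof you would have to apply an honest stability theorem to the reduced graph (for Case~I that could again be Theorem~\ref{thm:BasicStability}; for Case~II no off-the-shelf statement is available), at which point you are essentially reproducing the paper's argument with extra overhead.
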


We remark that
Theorem~\ref{thm:TuranExt}\ref{thm:TuranExt:bound} is also included in our
previous paper~\cite{AllBoettHlaPig_Turannical}, but we did not determine the
family of extremal graphs there.\footnote{Actually, at the time of writing~\cite{AllBoettHlaPig_Turannical} we believed that the family of extremal graphs described there was complete. Only later we discovered further constructions involving `sporadic vertices' (see below).} Hence our main contribution here is to determine the extremal graphs and prove stability. This, however, turns out to be an important tool
for~\cite{AllBottHlaPig}, where we determine the maximum number of edges in an
$n$-vertex graph without a given number of vertex-disjoint triangles. Note that
the statement of Theorem~\ref{thm:TuranExt_stab} gives a
slightly stronger version of stability than the usual one, namely that the set
$M$ is not changed in transforming $G$ to a member of $\Turm{r-1}{n}{m}$.
We require this in~\cite{AllBottHlaPig}.

We note that the proof of Theorem~\ref{thm:TuranExt}\ref{thm:TuranExt:bound}
as given in~\cite{AllBoettHlaPig_Turannical} hints the main arguments involved
in our proof of Theorem~\ref{thm:TuranExt}. However, several additional
tweaks and tricks are needed, in particular in the
case $n>(r-1)m$. We give an outline of the proofs of Theorem~\ref{thm:TuranExt} and~\ref{thm:TuranExt_stab} in Section~\ref{ssec:outline}.

\medskip
The $(r-1)m\ge n$ case of Theorem~\ref{thm:TuranExt} shows that the assumption
in Tur\'an's Theorem (or in that of Theorem~\ref{thm:BasicStability}) can be
substantially weakened from forbidding $K_r$-copies on all possible $r$-subsets
of the vertex set~$V(G)$, to just forbidding $K_r$-copies on a particular family
$\mathcal S$ of $r$-subsets---the family~$\mathcal S$ which contains all
$r$-subsets of~$V(G)$ which intersect~$M$.
In~\cite{AllBoettHlaPig_Turannical} we investigated such weakenings of the
assumption in Tur\'an's theorem also from a probabilistic perspective. In
particular, we proved that forbidding $K_r$-copies on a \emph{random} family of
$r$-sets $\mathcal S\subset\binom{n}{r}$ of size only $|\mathcal S|=\bigO(n^3)$
suffices.

\subsection{Extremal graphs}\label{sec:Extremal}

The family \index{$\Turm{r-1}{n}{m}$}$\Turm{r-1}{n}{m}$ is defined as
follows. As previously stated, if $n\le (r-1)m$ then $\Turm{r-1}{n}{m}=\big\{\Tur{r-1}{n}\big\}$. 
So assume from now on that $n>(r-1)m$. We explicitly describe the construction of
the graphs in~$\Turm{r-1}{n}{m}$.

We start with the Tur\'an graph $\Tur{r-1}{(r-1)m}$, with colour classes $V_1$,
\dots, $V_{r-1}$, and an arbitrary set $M$ of $m$ vertices in $V_1\cup\cdots\cup
V_{r-1}$. We add $r-1$ new vertices $v_1,\ldots,v_{r-1}$ to this graph with the
following property. For each $i\in[r-1]$, the vertex $v_i$ is adjacent to all
old and new vertices except those in $V_i$ (and itself). Finally, we add a set
$Y$ of $n-(r-1)m$ new vertices each of which is adjacent to all old and new
vertices except those in $M$ (and itself). In this way we obtain an
$(n+r-1)$-vertex graph, which we call $G_r(n,M)$. Note that the graph $G_r(n,M)$
depends on the placement of $M$ in $V_1\cup\cdots \cup V_{r-1}$. We let
$\Turm{r-1}{n}{m}$ be the family of $n$-vertex graphs which can be obtained from
 some graph $G_r(n,M)$ by deleting any $r-1$ vertices from
$\{v_1,\ldots,v_{r-1}\}\cup Y$ (see also Figure~\ref{fig:Turm}). We call the
vertices $v_1,\ldots,v_{r-1}$ \emph{sporadic}.

\begin{figure}[t]
     \centering
     \subfigure{
          \includegraphics[width=.28\textwidth]{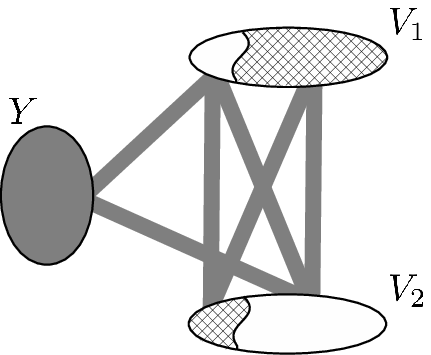}
     }
     \hspace{.12in}
     \subfigure{
          \includegraphics[width=.28\textwidth]{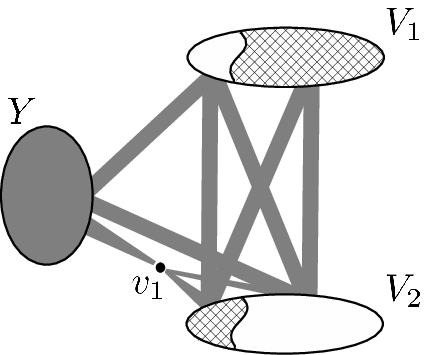}
     }
     \hspace{.12in}
     \subfigure{
         \includegraphics[width=.28\textwidth]{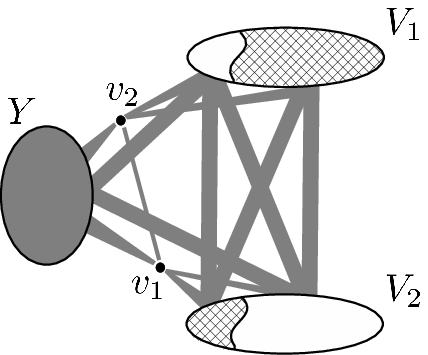}
     }
     \label{fig:Turm}
     \caption{Examples of graphs from~$\Turm{2}{n}{m}$ for $n>2m$, with no, one, and two sporadic vertices. Grey depicts complete (bipartite) graphs, the set $M$ is hatched.}
\end{figure}

Observe that there is no copy of $K_r$ in $G_r(n,M)$ which uses vertices of $M$.
Furthermore, the vertices $\{v_1,\ldots,v_{r-1}\}\cup Y$ form a clique in
$G_r(n,M)$, and each of these vertices has degree $n+r-2-m$. It follows that indeed every
graph in $\Turm{r-1}{n}{m}$ has the same number of edges, and that number is
\[\binom{n-(r-1)m}{2}+\big(n-(r-1)m\big)(r-2)m+\tur{r-1}{(r-1)m}=\turm{r-1}{n}{m}\,,\]
as desired.

\section{Proofs of Theorem~\ref{thm:TuranExt} and Theorem~\ref{thm:TuranExt_stab}}

\subsection{Outline of the proofs.}\label{ssec:outline}
We prove Theorem~\ref{thm:TuranExt} and Theorem~\ref{thm:TuranExt_stab} together.
We refer to the cases $n\le(r-1)m$ and  $n>(r-1)m$ as Cases~I and~II,
respectively. We prove Case~I first, and then prove Case~II using Case~I.
 
In Case~I, we sequentially pick maximum vertex disjoint cliques $P_1,\ldots,P_k$ of order at least~$r$. Because of their sizes, we know they do not intersect the set~$M$. A counting argument gives an upper bound on the number of edges in~$G$, depending on the sizes of these cliques (see Lemma~\ref{lem:largem}). This upper bound is enough to prove Theorem~\ref{thm:TuranExt} in Case~I. Further, we infer from Lemma~\ref{lem:largem} that if $e(G)>\tur{r-1}{n}-o(n^2)$ then the total order of the cliques $P_1,\ldots,P_k$ must be $o(n)$. Therefore, $e(G-\bigcup_i P_i)>e(G)-o(n^2)\ge\tur{r-1}{n}-o(n^2)$, and the Erd\H{o}s--Simonovits Stability Theorem, Theorem~\ref{thm:BasicStability}, applies to the graph $G-\bigcup_i P_i$. Thus, the graph $\Tur{r-1}{n-o(n)}$ is similar to $G-\bigcup_i P_i$, which in turn is similar to $G$, as needed.

Let us note that
even though Theorem~\ref{thm:TuranExt} extends Tur\'an's theorem, the counting argument in Lemma~\ref{lem:largem} actually relies on Tur\'an's result.
 
The proof strategy for Theorem~\ref{thm:TuranExt} in Case~II comes naturally from the structure of the extremal graphs. The key property to observe is that in these graphs, the neighborhood of the set $M$ induces essentially $\Tur{r-1}{(r-1)|M|}$ (with the exception of the sporadic vertices). As a first step, we apply a Zykov-type symmetrisation to our graph $G$ with no copy of $K_r$ intersecting $M$ (Lemma~\ref{lem:push}) to obtain a graph $G'$. We then perform a further simple transformation to remove any sporadic vertices, obtaining a graph $G''$ with at least as many edges as $G$. Now we can show that the union of $M$ and its neighbourhood in $G''$ cover at most $(r-1)|M|$ vertices (Lemma~\ref{lem:boundX}). This means that we can apply the bound from Case~I on the union of $M$ and its neighbourhood, and trivial bounds on edges in other parts of the graph, to conclude part~\ref{thm:TuranExt:bound}. To prove~\ref{thm:TuranExt:unique} we observe that equality is possible only if the union of $M$ and its neighbourhood in $G''$ is $\Tur{r-1}{(r-1)|M|}$ by Case~I of part~\ref{thm:TuranExt:unique} and the trivial bounds are sharp, in which case $G''$ is in $\Turm{r-1}{n}{m}$. This implies that $G'$ is also in $\Turm{r-1}{n}{m}$ (Lemma~\ref{lem:Seq}), and finally we conclude that $e(G)=e(G')$ only if $G=G'$ (Lemma~\ref{lem:push}), as required.

The proof of Case~II of Theorem~\ref{thm:TuranExt_stab} follows the same pattern as the uniqueness result, using Case~I of Theorem~\ref{thm:TuranExt_stab} to show that $G''$ is close in edit distance to a graph in $\Turm{r-1}{n}{m}$ and then we show that the same is true of $G$ (Lemma~\ref{lem:push}).
 
\subsection{Case~I}

The following lemma will be the key tool for proving uniqueness and
stability when $n\le(r-1)m$.

  \begin{lemma}\label{lem:largem}
    Given $m$ and $n\le(r-1)m$, let $G$ be an $n$-vertex graph and $M$ a
    subset of $V(G)$ with $|M|=m$ such that no copy of $K_r$ in $G$ uses
    vertices of $M$. Suppose that there are sets $P_1,\ldots,P_k$ of sizes
    $p_1,\ldots,p_k$ in $G$ such that the following holds for all $i\in[k]$.
    \begin{enumerate}[label=\rom]
      \item $|P_i|\ge r$.
      \item $P_i$ is the vertex set of a maximum clique in
        $G\big[V(G)\setminus\cup_{j=1}^{i-1}P_j\big]$.
     \item $G\big[V(G)\setminus\cup_{j=1}^{k}P_j\big]$ contains no
        $K_r$.
    \end{enumerate}
    Let $p:=\sum_{\ell=1}^k p_\ell$.
    Then we have
    \begin{equation*}
      e(G)\le \tur{r-1}{n}-\sum_{i=1}^k
      \sum_{j=0}^{p_i-r} \Big(m-\Big\lfloor\frac{n-\big(p-j-\sum_{\ell=i+1}^{k}p_\ell\big)}{r-1}\Big\rfloor-1\Big)\,.
    \end{equation*}
  \end{lemma}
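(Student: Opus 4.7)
I would prove this by induction on $k$, peeling off the clique $P_1$ in the inductive step. The base case $k=0$ is immediate: hypothesis~(iii) then says that $G$ itself is $K_r$-free, so Tur\'an's theorem gives $e(G)\le\tur{r-1}{n}$, and the sum in the conclusion is empty.

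For the inductive step, first note that $P_1\cap M=\emptyset$: otherwise, any $r$ vertices of the clique $P_1$ containing a vertex of $M$ would form a $K_r$ touching $M$. Writing
\[
  e(G)=e(G-P_1)+\binom{p_1}{2}+e\bigl(P_1,V(G)\setminus P_1\bigr),
\]
I would bound the cross edges by splitting according to whether the non-$P_1$ endpoint lies in $M$. Every $v\in M$ has at most $r-2$ neighbours in $P_1$, because $r-1$ such neighbours (which are pairwise adjacent since $P_1$ is a clique) together with $v$ would form a $K_r$ touching $M$. Every $v\in V(G)\setminus(P_1\cup M)$ has at most $p_1-1$ neighbours in $P_1$ by the maximality of the clique $P_1$ in $G$. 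This yields
\[
  e\bigl(P_1,V(G)\setminus P_1\bigr)\le (r-2)m+(p_1-1)(n-p_1-m).
\]
The hypotheses of the lemma hold for $G-P_1$ with the cliques $P_2,\ldots,P_k$ and the same set $M$ (since $n-p_1\le(r-1)m$ and removing $P_1$ preserves the maximality condition~(ii) on each remaining $P_i$), so the inductive hypothesis applies.

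Combining the two bounds, the induction closes provided
\[
  \tur{r-1}{n}-\tur{r-1}{n-p_1}\ge\binom{p_1}{2}+(r-2)m+(p_1-1)(n-p_1-m)+\sum_{j=0}^{p_1-r}\Bigl(m-\Bigl\lfloor\tfrac{n-p_1+j}{r-1}\Bigr\rfloor-1\Bigr).
\]
The coefficient of $m$ on the right-hand side is $(r-2)-(p_1-1)+(p_1-r+1)=0$, so the $m$-terms drop out. Using the standard telescoping $\tur{r-1}{s}-\tur{r-1}{s-1}=(s-1)-\lfloor(s-1)/(r-1)\rfloor$, the left-hand side rewrites as $p_1(n-p_1)+\binom{p_1}{2}-\sum_{j=0}^{p_1-1}\lfloor(n-p_1+j)/(r-1)\rfloor$, and after cancellation the required inequality reduces to
\[
  \sum_{j=p_1-r+1}^{p_1-1}\Bigl\lfloor\tfrac{n-p_1+j}{r-1}\Bigr\rfloor\le n-r+1.
\]
This is in fact an equality by the elementary identity $\sum_{j=0}^{r-2}\lfloor(x+j)/(r-1)\rfloor=x$ for integer $x\ge 0$, valid because among any $r-1$ consecutive integers exactly one representative of each residue class modulo $r-1$ appears.

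The main obstacle is purely bookkeeping: the target sum is designed so that the argument $n-p+j+\sum_{\ell>i}p_\ell$ for the index $i=1$ becomes $n-p_1+j$, and the remaining indices $i\ge 2$ match exactly the bound produced by applying the inductive hypothesis to $G-P_1$. The one conceptual step is the bound $(r-2)m+(p_1-1)(n-p_1-m)$ on cross edges---the only place where the hypothesis that no $K_r$ meets $M$ is used; the rest is engineered algebra with the floor identity delivering the tight cancellation.
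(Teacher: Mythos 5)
Your proof is correct and is essentially the paper's argument repackaged as an induction on $k$: the per-clique bounds you use ($r-2$ neighbours in $P_1$ from each vertex of $M$, $p_1-1$ from each other outside vertex, Tur\'an's theorem on the $K_r$-free remainder) are exactly the estimates the paper assembles into a single global inequality (its function $g_n$), and your floor-sum identity is the same Tur\'an-number telescoping the paper encodes in its recurrences. The only difference is organizational: the paper derives one closed-form upper bound and then telescopes it one vertex or one size-$r$ clique at a time, rather than inducting on the number of cliques.
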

  \begin{proof}
    We first
    establish some simple bounds on the number of edges in $G$.
    Each $P_i$ contains $\binom{p_i}{2}$ edges. By the maximality of
    $P_1,\ldots,P_k$ we have $\deg(v,P_i)\le p_i-1$ for any
    $v\in V(G)\setminus \bigcup_{j=1}^{i}V(P_i)$.
    Because no copy of $K_r$ in $G$ intersects $M$, we have $M\subset
    V(G)\setminus \bigcup_{i=1}^k V(P_i)$ and the stronger bound $\deg(v,P_i)\le
    r-2$ for each $v\in M$. Finally, since the graph
    $G-\cup_{i=1}^k P_i$ is $K_r$-free, by Tur\'an's theorem we have
    \begin{equation}\label{eq:BasicTuran}
      e\Big(G-\bigcup_{i=1}^k V(P_i)\Big)\le\tur{r-1}{n-p}\,.
    \end{equation}
    Putting these estimates together we obtain
    \begin{equation}
      \label{eq:TuranExt:g}
      \begin{split}
        e(G) \le
        \sum_{i=1}^k\binom{p_i}{2}
        &+
        \sum_{1\le i<j\le k} (p_i-1)p_j+(p-k)(n-m-p)\\  
        &+mk(r-2)+ \tur{r-1}{n-p}\,.
      \end{split}
    \end{equation}
    Observe that the right hand side of~\eqref{eq:TuranExt:g} defines a function, which we denote $g_n(p_1,\ldots,p_k)$, whose domain is the set of tuples (of any length $k$) of nonnegative integers. In particular we allow $k=0$,
    when~\eqref{eq:TuranExt:g} gives $g_n()=\tur{r-1}{n}$.
    
    We now give two equalities relating values of $g_n$. As a preparatory step, observe that for
    any $n'$ we have
    \begin{align}\label{eq:TId1}
      \tur{r-1}{n'+1}-\tur{r-1}{n'}
        &=n'-\Big\lfloor\frac{n'}{r-1}\Big\rfloor\;,
      \quad\text{and} \\
      \label{eq:TId2}
      \tur{r-1}{n'+r}-\tur{r-1}{n'}&=(r-1)n'+\binom{r}{2} -\Big\lfloor\frac
        {n'+r-1}{r-1}\Big\rfloor\;.
    \end{align}
    Now suppose that $k\ge1$.
    If $p_k>r$ then plugging~\eqref{eq:TId1} (with $n'=n-p=n-\sum_{\ell=1}^k p_\ell$) into
    the definition of~$g_n$ in~\eqref{eq:TuranExt:g} we obtain
    \begin{equation}\label{eq:S1}
      g_n(p_1,\ldots,p_{k-1},p_k-1)-g_n(p_1,\ldots,p_{k-1},p_k)= m-\Big\lfloor
      \frac{n-\sum_{\ell=1}^k p_\ell}{r-1}\Big\rfloor-1\,.
    \end{equation}
    Similarly, if $p_k=r$ then~\eqref{eq:TId2}
    implies
    \begin{equation}\label{eq:S2}
      g_n(p_1,\ldots,p_{k-1})-g_n(p_1,\ldots,p_{k-1},p_k)=m-\Big\lfloor
      \frac{n-\sum_{\ell=1}^k p_\ell}{r-1}\Big\rfloor-1\,.
    \end{equation}
    We note that our condition $n\le(r-1)m$ implies that $m-\big\lfloor
    \frac{n-p}{r-1}\big\rfloor-1>0$.
    
    Applying repeatedly both~\eqref{eq:S1} and~\eqref{eq:S2} we obtain
    \begin{equation*}
      g_n()-g_n(p_1,\ldots,p_k)=\sum_{i=1}^k \sum_{j=0}^{p_i-r}
      \Big(m-\Big\lfloor\frac{n-\big(p-j-\sum_{\ell=i+1}^{k}p_\ell\big)}{r-1}\Big\rfloor-1\Big)\,,
    \end{equation*}
    which together with $e(G)\le g_n(p_1,\ldots,p_k)$ and $g_n()=\tur{r-1}{n}$
    yields the desired bound on $e(G)$.
  \end{proof}
  
We are now ready to prove Case~I.

  \begin{proof}[Proofs of Theorems~\ref{thm:TuranExt} and~\ref{thm:TuranExt_stab}, Case~I]
    Let $G$ be an $n$-vertex graph and $M$ a subset of $V(G)$ of size $m$,
    where $n\le (r-1)m$, such that no~$K_r$ of~$G$ intersects~$M$. We 
 iteratively find vertex disjoint cliques $P_1,\dots,P_k$ of sizes $p_1,\ldots,p_k$
    with at least $r$ vertices as follows. Suppose that for some~$i$, the cliques $P_1,\dots,P_{i-1}$
    have already been defined. Let $P_i$ be an arbitrary maximum
    clique on at least~$r$ vertices in the graph $G-\bigcup_{j<i}P_j$. We set $k:=i-1$ and terminate if no such clique exists.
    Let $p:=\sum_{\ell=1}^k p_\ell$. Now $G$, $M$ and $P_1,\ldots,P_k$
    satisfy the conditions of Lemma~\ref{lem:largem}, so we have
    \begin{equation}\label{eq:TuranExt:eG}
      e(G)\le \tur{r-1}{n}-\sum_{i=1}^k
      \sum_{j=0}^{p_i-r} \Big(m-\Big\lfloor\frac{n-\big(p-j-\sum_{\ell=i+1}^{k}p_\ell\big)}{r-1}\Big\rfloor-1\Big)\,.
    \end{equation}
    
    We first prove Theorem~\ref{thm:TuranExt}. We distinguish two cases. First,
    $G$ contains no copy of $K_r$. In this case Tur\'an's theorem
    guarantees that $e(G)\le\tur{r-1}{n}$ with equality if and only if
    $G=\Tur{r-1}{n}$.
    
    Second, $G$ contains at least one copy of $K_r$. In this case, there is at
    least one term in the double sum in~\eqref{eq:TuranExt:eG} (since $P_1$ exists) and
    the smallest of the summands is that with $i=1$ and $j=p_1-r$, i.e., 
    \begin{equation*}
      m-\bigg\lfloor\frac{n-\big(p-(p_1-r)-\sum_{\ell=2}^{k}p_\ell\big)}{r-1}\bigg\rfloor-1
      =m-\big\lfloor\tfrac{n-r}{r-1}\big\rfloor-1
      =m-\big\lfloor\tfrac{n-1}{r-1}\big\rfloor\,.
    \end{equation*}
    Since $n\le(r-1)m$, we have $m\ge\big\lceil\tfrac{n}{r-1}\big\rceil$ and
    hence the smallest summand is at least $1$. It follows that
    $e(G)<\tur{r-1}{n}$ and so~$G$ is not extremal. This
    proves~\ref{thm:TuranExt:bound} and~\ref{thm:TuranExt:unique}.
    
    It remains to prove Theorem~\ref{thm:TuranExt_stab}. Given $\eps>0$,
    we let $\gamma^*$ be the constant given by
    Theorem~\ref{thm:BasicStability} for the input $\eps^*:=\eps/2$. We let
    \begin{equation}\label{eq:turextstab:setgammas}
      \gamma_1:=\min\big(\gamma^*,\tfrac{1}{4},\eps\big)\quad
      \text{ and }\quad \gamma:=\tfrac{\gamma_1^2}{64r^2}\,.
    \end{equation}
    
    Suppose that $e(G)\ge\turm{r-1}{n}{m}-\gamma n^2$. We may assume that
    $\gamma n^2\ge 1$, as otherwise our uniqueness result gives
    $G=\Tur{r-1}{n}$.  It follows in particular
    by~\eqref{eq:turextstab:setgammas} that $\gamma_1 n\ge 8r$, which in turn gives
    \begin{equation}\label{eq:HN}
      p-2r \ge p-\gamma_1 n/4\;.
    \end{equation}
    Observe
    that the $p-(k-1)r$ values $j+\sum_{\ell=i+1}^{k}p_\ell$
    in~\eqref{eq:TuranExt:eG} form a sequence of distinct integers, with,
    if ordered, consecutive values separated by either $1$ or $r$, and the
    smallest is~$0$. Thus at least $p/(2r)$ of these values satisfy
    $j+\sum_{\ell=i+1}^{k}p_\ell\le p/2$, or equivalently, 
    $p-j-\sum_{\ell=i+1}^{k}p_\ell\ge p/2$. In addition, as before all
    summands of the double sum in~\eqref{eq:TuranExt:eG} are non-negative. It
    follows that
    \begin{align*}
      e(G)&\le
      \tur{r-1}{n}-\frac{p}{2r}\Big(m-\Big\lfloor\frac{n-p/2}{r-1}\Big\rfloor-1\Big)
      \\&\le
      \tur{r-1}{n}-\frac{p}{2r}\Big(m-\frac{n}{r-1}+\frac{p-2r}{2r-2}\Big)
      \le\tur{r-1}{n}-\frac{p(p-2r)}{4r^2}\,,
    \end{align*}
    where we used $n\le(r-1)m$ in the last inequality.  Since
    $e(G)\ge\tur{r-1}{n}-\gamma n^2$, we can
    use~\eqref{eq:turextstab:setgammas} and~\eqref{eq:HN} to conclude $p\le\gamma_1 n/2$.
    
    Let $G'$ be the subgraph of $G$ induced
    by $V(G)\setminus\cup_{i=1}^k P_i$. We have
    \[e(G')>\tur{r-1}{n}-\gamma
    n^2-\tfrac12\gamma_1 n^2\geByRef{eq:turextstab:setgammas}\tur{r-1}{n}-\tfrac34\gamma_1
    n^2\,,\] and since
    $v(G')\ge(1-\gamma_1/2)n\geByRef{eq:turextstab:setgammas}\frac{7}{8}n$, we have
    $e(G')>\tur{r-1}{v(G')}-\gamma_1 v(G')^2$. By definition of the sets
    $P_i$ the graph $G'$
    is $K_r$-free. Therefore, by Theorem~\ref{thm:BasicStability} the graph $G'$ is $\eps^*$-close to
    $\Tur{r-1}{v(G')}$. It follows that $G$ is $\big(\eps^*
    v(G')^2+\gamma_1n^2/2\big)$-near to $\Tur{r-1}{n}$, and thus
    by~\eqref{eq:turextstab:setgammas} that $G$ is $\eps$-close to $\Tur{r-1}{n}$ as required.
  \end{proof}

\subsection{Case~II}

We first state three lemmas which we will use to prove
Theorems~\ref{thm:TuranExt} and~\ref{thm:TuranExt_stab} in Case~II. Note that the first two of these lemmas
do not require the condition $n>(r-1)m$. The first lemma asserts that every
graph~$G$ with no~$K_r$ intersecting~$M$ can easily be modified such that each
vertex outside~$M$ has high degree.

\begin{lemma}\label{lem:push}
  Let $G$ be an $n$-vertex graph and $M\subset V(G)$ have size $m$. Assume that
  no copy of $K_r$ in $G$ intersects $M$. Given $\mu\in[0,1)$, there is a graph
  $G'$ on $V(G)$ with the following properties.
  \begin{enumerate}[label=\abc]
    \item\label{lem:push:c1} $G'$ has no copy of $K_r$ intersecting $M$.
    \item\label{lem:push:c2} $e(G')\ge e(G)$, with equality if and only if
      $G=G'$.
    \item\label{lem:push:c3} Either $e(G')>e(G)+\mu^2 n^2$, or $G'$ is $\mu
      n^2$-near to $G$ (without relabelling vertices).
    \item\label{lem:push:c4} Every vertex $v\in V(G)\setminus M$ has
    $\deg_{G'}(v)\ge n-m-\mu n-1$.
  \end{enumerate} 
\end{lemma}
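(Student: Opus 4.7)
I would prove the lemma by an iterative Zykov-type vertex substitution. Set $G_0:=G$, and at step $i$ check whether every $v\in V(G_i)\setminus M$ satisfies $\deg_{G_i}(v)\ge n-m-\mu n-1$; if so, put $G':=G_i$ and stop. Otherwise pick a $v\in V(G_i)\setminus M$ with $\deg_{G_i}(v)<n-m-\mu n-1$ and a vertex $u\in V(G_i)\setminus M$ of maximum degree in $G_i$, and form $G_{i+1}$ by replacing $N_{G_i}(v)$ with $N_{G_i}(u)\setminus\{v\}$, leaving every other adjacency unchanged.

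Properties \ref{lem:push:c1}, \ref{lem:push:c2} and \ref{lem:push:c4} then follow essentially by the definition of the substitution. For \ref{lem:push:c1}, any $K_r$ in $G_{i+1}$ that meets $v$ is contained in $\{v\}\cup N_{G_i}(u)$, so swapping $v$ for $u$ (which is outside $M$) would produce a $K_r$ in $G_i$ with the same intersection with $M$, contradicting the inductive hypothesis on $G_i$; any $K_r$ avoiding $v$ is unchanged. For \ref{lem:push:c2}, $\deg_{G_{i+1}}(v)=|N_{G_i}(u)\setminus\{v\}|\ge \deg_{G_i}(u)-1\ge\deg_{G_i}(v)$, so $e(G_i)$ is weakly monotone, with equality preserved throughout if and only if no substitution is ever performed, i.e., $G'=G$. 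Condition \ref{lem:push:c4} is precisely the stopping criterion.

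The dichotomy in \ref{lem:push:c3} comes from bookkeeping on the number $t$ of substitutions. A single substitution edits at most $\deg_{G_i}(v)+\deg_{G_i}(u)\le 2(n-1)$ edges (all incident to $v$), and, whenever it is strictly edge-increasing (for instance when $u\not\sim v$ with $\deg(u)>\deg(v)$, or when $\deg(u)\ge\deg(v)+2$), it contributes at least one edge to the cumulative gain. Restricting the algorithm to perform only such strict substitutions, after $t$ steps the edit distance from $G$ to $G_t$ is at most $2tn$ while the edge gain is at least $t$; a straightforward split on $t$, amplifying the per-step gain by the typical gap $\deg(u)-\deg(v)$, gives either $e(G')-e(G)>\mu^2 n^2$ or $|E(G)\triangle E(G')|\le \mu n^2$.

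The main obstacle is the ``stuck'' configuration in which \ref{lem:push:c4} fails but no strictly edge-increasing substitution is available; this can only happen when every $u\in V(G_i)\setminus M$ has degree in $\{d,d+1\}$ for some $d<n-m-\mu n-1$. In that regime I would abandon the iteration and set $G'$ to be any fixed member of $\Turm{r-1}{n}{m}$, so that \ref{lem:push:c1} and \ref{lem:push:c4} hold automatically and \ref{lem:push:c2} holds by Theorem~\ref{thm:TuranExt}\ref{thm:TuranExt:bound}; the degree-sum bound $\sum_{v\notin M}\deg_{G_i}(v)\le(n-m)(d+1)$ produced by the stuck regime, combined with the $K_r$-through-$M$ restriction on $G_i[M]$ (so $e(G_i[M])\le\tur{r-1}{m}$) and the trivial bound $e(G_i[V\setminus M,M])\le m(n-m)$, should force $e(G_i)$, and hence $e(G)$, to be smaller than $\turm{r-1}{n}{m}-\mu^2 n^2$, giving the first clause of \ref{lem:push:c3}. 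Pinning this quantitative comparison down uniformly in the range permitted by the lemma, particularly when $n$ is close to $(r-1)m$ and the slack becomes tight, is the point at which I expect the proof to require the most care, quite possibly via a further structural refinement or a direct appeal to the Case~I bound already established.
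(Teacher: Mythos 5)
There is a genuine gap, and it is concentrated exactly where you flag uncertainty. The paper's proof is a single, much cruder move: whenever some $v\in V(G)\setminus M$ has $\deg(v)<n-m-\mu n-1$, it deletes all edges at $v$ and joins $v$ to \emph{all} of $V(G)\setminus(M\cup\{v\})$. This makes $\deg(v)$ exactly $n-m-1$, so each step gains \emph{more than $\mu n$ edges} while editing at most $n$; conclusion~\ref{lem:push:c1} is immediate because $v$'s new neighbourhood avoids $M$; degrees outside $M$ never decrease, so the process terminates with~\ref{lem:push:c4}; and the dichotomy~\ref{lem:push:c3} falls out because more than $\mu n$ steps force a gain exceeding $\mu^2n^2$. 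Your Zykov substitution onto a maximum-degree vertex $u$ cannot deliver this: the per-step gain is only $\deg(u)-\deg(v)$ (possibly $0$ or $1$), so after $t$ steps you have gain $\Omega(t)$ against $\bigO(tn)$ edits, and no choice of threshold on $t$ yields ``either gain $>\mu^2n^2$ or at most $\mu n^2$ edits''. The ``amplification by the typical gap'' you invoke has no lower bound available --- the gap can be exactly $2$ at every step --- so~\ref{lem:push:c3} is not established on the main branch either, quite apart from the stuck regime.

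The fallback for the stuck regime does not repair this. First, it is circular: you justify~\ref{lem:push:c2} for the substituted extremal graph via Theorem~\ref{thm:TuranExt}\ref{thm:TuranExt:bound}, but Case~II of that theorem is proved \emph{using} Lemma~\ref{lem:push}. Second, the quantitative claim $e(G)<\turm{r-1}{n}{m}-\mu^2n^2$ is false in general: the lemma allows any $\mu\in[0,1)$, and for $\mu$ close to $1$ one has $\mu^2n^2>\binom{n}{2}\ge\turm{r-1}{n}{m}$, so the claim would force $e(G)<0$; even for moderate $\mu$ the degree-sum bound only gives a deficit of order $\mu n^2$, not $\mu^2 n^2$ plus the gap between your upper bound and $\turm{r-1}{n}{m}$ (try $r=3$, $m=1$: your bound is $\approx(1-\mu)n^2/2$ versus the required $\approx(1-2\mu^2)n^2/2$). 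Third, if the claim fails then replacing $G$ by an unrelated extremal graph can violate the ``equality only if $G=G'$'' clause of~\ref{lem:push:c2} and the nearness clause of~\ref{lem:push:c3} simultaneously. The fix is simply to abandon the substitution-to-$u$ and rewire deficient vertices to the whole of $V(G)\setminus(M\cup\{v\})$, which is legitimate precisely because such a vertex can never lie on a $K_r$ meeting $M$.
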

\begin{proof}
  We obtain $G'$ from $G$ by repeating the following procedure until
  conclusion~\ref{lem:push:c4} is satisfied. If there exists a vertex $v\in
  V(G)\setminus M$ with degree smaller than $n-m-\mu n-1$, delete all edges
  containing $v$ and insert all edges from $v$ to $V(G)\setminus
  \big(M\cup\{v\}\big)$.
  
  Observe that at each step, we add at least $\mu n$ edges to the graph,
  and edit at most $n$ edges. It follows that the algorithm terminates, and
  thus conclusions~\ref{lem:push:c2} and~\ref{lem:push:c4} get satisfied.
  Clearly, the resulting graph~$G$ also satisfies~\ref{lem:push:c1}.
  Furthermore, if the procedure is repeated more than $\mu n$
  times, then $e(G')-e(G)>\mu^2 n^2$, while otherwise the number of edits is at
  most $\mu n^2$, so conclusion~\ref{lem:push:c3} is satisfied.
\end{proof}

The next lemma states that there are few vertices which have big degree
in~$G$ and many neighbours in~$M$.

\begin{lemma}\label{lem:boundX}
  Let $G$ be an $n$-vertex graph and $M\subset V(G)$ have size~$m$. Assume that
  no copy of $K_r$ in $G$ intersects $M$. Given $\nu\in[0,1)$, let $X$ be the set
  of vertices in $G$ outside $M$ with at least $\max(1,\nu n)$ neighbours in $M$.
  Suppose that every vertex of $X$ has degree at least $n-m-\nu^2
  n$. Then we have $|X|\le(1+\nu)(r-2)m$.
\end{lemma}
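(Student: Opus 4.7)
\emph{Strategy.} I would argue by contradiction, assuming $|X|>(1+\nu)(r-2)m$ and aiming to produce a $K_r$ in $G$ that touches $M$. First dispose of the trivial regime $\nu n>m$: any $x\in X$ would need $|N(x)\cap M|\ge\nu n>m=|M|$, which is impossible, so $X=\emptyset$ and we are done. Henceforth assume $\nu n\le m$; then $\nu^2 n\le\nu m$ and in particular $(r-2)(m+\nu^2 n)\le(1+\nu)(r-2)m<|X|$.

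\emph{Iteration giving a clique-free bound.} The degree hypothesis gives that every $x\in X$ has at most $m+\nu^2 n-1$ non-neighbours in $V(G)\setminus\{x\}$, so for every set $S\ni x$, $|S\cap N(x)|\ge|S|-m-\nu^2 n$. Building a clique vertex-by-vertex from shrinking common neighbourhoods, any $T\subseteq X$ with $|T|>(r-2)(m+\nu^2 n)$ must contain a $K_{r-1}$. Since $G[N(v)]$ is $K_{r-1}$-free for every $v\in M$ (else $\{v\}$ together with the $K_{r-1}$ would form a $K_r$ intersecting $M$), applying this iteration to $T=X\cap N(v)$ yields
\[|X\cap N(v)|\le(r-2)(m+\nu^2 n)\quad\text{for every }v\in M.\]
If $G[X]$ were itself $K_{r-1}$-free, the same iteration applied to $T=X$ would give $|X|\le(r-2)(m+\nu^2 n)\le(1+\nu)(r-2)m$, contradicting our assumption. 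Hence $G[X]$ contains a $K_{r-1}$. Moreover if $\nu n>(r-2)m/(r-1)$, then every $K_{r-1}$ $\{x_1,\dots,x_{r-1}\}\subset X$ has $\sum_id(x_i)\ge(r-1)\nu n>(r-2)m$, which will immediately contradict the key inequality of the next paragraph, so we may also assume $\max(1,\nu n)\le(r-2)m/(r-1)$.

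\emph{Greedy extraction of a violating clique.} For any $K_{r-1}$ $\{x_1,\ldots,x_{r-1}\}\subset G[X]$, the emptiness of $\bigcap_iN(x_i)\cap M$ (to avoid a $K_r$ through $M$) rewrites, via $|M|=\bigl|\bigcup_i(M\setminus N(x_i))\bigr|\le\sum_i(m-d(x_i))$ with $d(x):=|N(x)\cap M|$, as $\sum_id(x_i)\le(r-2)m$. I would choose a $K_{r-1}$ greedily so that this inequality is violated: at step $i$, pick $x_i\in A_{i-1}:=X\cap\bigcap_{j<i}N(x_j)$ maximising $d$, which is possible as $|A_{i-1}|\ge|X|-(i-1)(m+\nu^2 n)>0$ by our assumption on $|X|$. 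The identity $\sum_{y\in A_{i-1}}d(y)=\sum_{v\in M}|X\cap N(v)\cap A_{i-1}|$, combined with $\sum_{y\in X}d(y)\ge|X|\max(1,\nu n)$, the loss bound $|X\setminus A_{i-1}|\le(i-1)(m+\nu^2 n)$, and the per-$v$ bound $|X\cap N(v)|\le(r-2)(m+\nu^2 n)$ from the previous paragraph, lets me lower-bound each $d(x_i)$ by an explicit average; summing across $i=1,\dots,r-1$ should then yield $\sum_id(x_i)>(r-2)m$. Any $v$ in the (hence non-empty) set $\bigcap_iN(x_i)\cap M$ provides the desired $K_r=\{v,x_1,\ldots,x_{r-1}\}$ intersecting $M$.

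\emph{Main obstacle.} The numerics close essentially for free when $\nu n\ge m$, via the naive double count $|X|\cdot\nu n\le\sum_v|X\cap N(v)|\le m(r-2)(m+\nu^2 n)$, and also when $\nu n>(r-2)m/(r-1)$, via the clique-free iteration bound above. The genuinely delicate regime is $\max(1,\nu n)\le(r-2)m/(r-1)$ (and in particular $\nu n<m$), where the double-count bound is too weak by a factor of roughly $m/(\nu n)$ and one really must exploit the greedy extraction with both the identity for $\sum_{y\in A_{i-1}}d(y)$ and the per-$v$ upper bound; the main technical point is to arrange the weights in this combination so that the final estimate fires precisely under $|X|>(1+\nu)(r-2)m$, with the sub-case $\nu n<1$ handled separately (where $\nu^2 n$ is negligible and $(r-2)(m+\nu^2 n)$ essentially matches $(r-2)m$).
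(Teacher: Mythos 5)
There is a genuine gap, and it sits exactly in the regime you yourself flag as delicate. Your entire mechanism for reaching a contradiction is to exhibit a single $(r-1)$-clique $\{x_1,\dots,x_{r-1}\}\subset X$ with $\sum_i d(x_i)>(r-2)m$, where $d(x)=|N(x)\cap M|$. But when every vertex of $X$ has $d(x)$ equal (or close) to the minimum $\max(1,\nu n)$, no $(r-1)$-subset of $X$ can satisfy this: every $(r-1)$-clique has $d$-sum about $(r-1)\max(1,\nu n)$, which is far below $(r-2)m$ once $\max(1,\nu n)\le (r-2)m/(r-1)$ --- precisely the case you are left with. Concretely, take $r=3$, $\nu n\le 1$ and suppose $d(y)=1$ for all $y\in X$ with $|X|=m+1$. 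All the inequalities you propose to combine (the lower bound $\sum_{y\in X}d(y)\ge|X|\max(1,\nu n)$, the loss bound on $|X\setminus A_{i-1}|$, the per-$v$ bound, and $d(x_i)\ge\mathrm{avg}_{A_{i-1}}d$) are consistent with this data, yet they can only ever certify $d(x_1)+d(x_2)\ge 2$, not $>m$. So the chain cannot close: no choice of weights fixes this, because the inequality you want to violate is a union-bound \emph{sufficient} condition for a common $M$-neighbour, and in the small-$d$ regime it is simply never violated even though the configuration is impossible. (In that example the actual contradiction is that $d(x)=1$ forces each $x\in X$ to spend its entire non-neighbour budget on $M$, so $G[X]$ is a complete graph on $m+1$ vertices, while each $v\in M$ can see at most one vertex of a clique --- a contradiction obtained by counting over a clique of size $m+1$, not of size $r-1$.)

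The missing idea is to work with a \emph{maximum} clique $x_1,\dots,x_k$ in $G[X]$, whose size $k$ may be much larger than $r-1$. Maximality gives that every vertex of $X$ is a non-neighbour of some $x_i$, so $\sum_i s_i\ge|X|$ where $s_i$ counts non-neighbours of $x_i$ in $X$ (including $x_i$); the degree hypothesis converts this into $d(x_i)\ge s_i-\nu^2 n$, and membership in $X$ gives $d(x_i)\ge\nu n$. Since no $v\in M$ can be adjacent to $r-1$ of the $x_i$ (that would be a $K_r$ meeting $M$), double counting gives $(r-2)m\ge\sum_i d(x_i)$, whence $(r-2)m\ge k\nu n$ and $(r-2)m\ge|X|-k\nu^2 n\ge|X|-\nu(r-2)m$. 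Your preliminary reductions (the trivial case $\nu n>m$, the greedy construction of $(r-1)$-cliques, the bound $|X\cap N(v)|\le(r-2)(m+\nu^2 n)$, and the $K_{r-1}$-free case) are all correct, but they do not substitute for this global count, and the final step of your plan would fail.
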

\begin{proof}
  Let $x_1,\ldots,x_k$ be the vertices of a maximum clique in $G[X]$. For each
  $i\in[k]$, let $s_i$ be the number of non-neighbours of $x_i$ in $X$
  (including $x_i$ itself). Because $x_1,\ldots,x_k$ is a maximum clique, every
  vertex of $X$ is a non-neighbour of at least one $x_i$, and therefore we have
  $s_1+\ldots+s_k\ge|X|$.
  
  Observe that $x_i$ has at most $n-m-s_i$ neighbours outside
  $M$. Hence, by definition of~$X$ and since $\deg(x_i)\ge n-m-\nu^2 n$
  the vertex~$x_i$ has at least $\max\big(\nu n,s_i-\nu^2 n\big)$
  neighbours in $M$. On the other hand, no vertex of $M$ is adjacent to
  more than $r-2$ of the vertices $x_1,\ldots,x_k$, or there would be a
  copy of $K_r$ intersecting $M$. It follows that $(r-2)|M|\ge k\nu
  n$ and
  \[(r-2)|M|\ge \sum_{i=1}^k\big(s_i-\nu^2 n\big)\ge |X|-k\nu^2
  n\ge|X|-\nu(r-2)|M|\,,\]
  from which we have $|X|\le(1+\nu)(r-2)|M|$.
\end{proof}

The final preparatory lemma asserts that $\Turm{r-1}{n}{m}$ is closed under
certain local modifications.
\begin{lemma}\label{lem:Seq}
  Suppose that $n>(r-1)m$. Let $G_1\in\Turm{r-1}{n}{m}$ be a graph in which no
  $K_r$ intersects the $m$-set $M\subset V(G_1)$, and let $v\in V(G_1)\setminus
  M$ be a vertex whose neighbourhood in $G_1$ is
  $V(G_1)\setminus\big(M\cup\{v\}\big)$. Delete all edges incident to $v$ and
  insert $n-m-1$ edges, of which at least one goes to $M$. If there
  is no copy of $K_r$ intersecting~$M$ in the modified graph $G_2$, then
  $G_2\in\Turm{r-1}{n}{m}$.
\end{lemma}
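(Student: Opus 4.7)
The plan is to determine the new neighbourhood $N:=N_{G_2}(v)$ explicitly, and then verify that $G_2$ admits a $\Turm{r-1}{n}{m}$-construction using the same Tur\'an partition and the same set $M$ as $G_1$, but with a possibly different choice of which $r-1$ vertices of $\{v_1,\dots,v_{r-1}\}\cup Y$ get deleted.

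First I identify what $v$ must be inside $G_1$. Writing $G_1$ in the standard form from Section~\ref{sec:Extremal}, with parts $V_1,\dots,V_{r-1}$ of the core $\Tur{r-1}{(r-1)m}$, kept sporadic set $V_s\subseteq\{v_1,\dots,v_{r-1}\}$, kept $Y$-vertex set $Y'$, and $M_i:=M\cap V_i$, a direct case check of the adjacencies of $v$ (using $v\notin M$) shows that $N_{G_1}(v)=V(G_1)\setminus(M\cup\{v\})$ can hold only in one of two ways: either (Case~A) $v\in Y'$, or (Case~B) $v=v_j\in V_s$ with $V_j=M$, i.e.\ all of $M$ lies in a single part.

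Next I analyse $N$. Set $H:=G_2-v=G_1-v$, and write $V_s^H,Y'^H$ for the sporadic and $Y$-vertex sets of $H$. For each $w\in N\cap M\cap V_i$, the absence of a $K_r$ on $\{v,w\}\cup(N\cap N_H(w))$ forces $N\cap N_H(w)$ to be $K_{r-2}$-free in $H$. A direct computation gives $N_H(w)=V(H)\setminus(V_i\cup(V_s^H\cap\{v_i\})\cup Y'^H)$, and shows that $H[N_H(w)]$ is complete $(r-2)$-partite with parts $A_j:=V_j\cup(V_s^H\cap\{v_j\})$ for $j\in[r-1]\setminus\{i\}$. Hence $K_{r-2}$-freeness is equivalent to $N\cap A_j=\emptyset$ for some $j\ne i$, so $\mathcal{J}:=\{j\in[r-1]:N\cap A_j=\emptyset\}$ is nonempty (using $N\cap M\ne\emptyset$). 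For any $j_0\in\mathcal{J}$, the containment $N\subseteq V(H)\setminus A_{j_0}$, the identity $|N|=n-m-1$, and $|V(H)\setminus A_{j_0}|=n-1-m-|V_s^H\cap\{v_{j_0}\}|$ together force $v_{j_0}\notin V_s^H$ and $N=V(H)\setminus V_{j_0}$; in particular $j_0$ is determined uniquely.

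Finally, in $G_2$ the vertex $v$ has exactly the neighbourhood of the ``missing'' sporadic $v_{j_0}$, and it remains to exhibit $G_2$ as an element of $\Turm{r-1}{n}{m}$ by relabelling $v$ accordingly. In Case~A, $V_s^H=V_s$, so $v_{j_0}\notin V_s$, and $G_2$ arises from the same $G_r(n,M)$ as $G_1$ by keeping the sporadic set $V_s\cup\{v_{j_0}\}$ (with $v$ realising $v_{j_0}$) together with the $Y$-set $Y'\setminus\{v\}$. In Case~B, $j_0\ne j$ (since $N\cap M\subseteq M_j\ne\emptyset$ forces $j\notin\mathcal{J}$), so $v_{j_0}\ne v_j$; combined with $v_{j_0}\notin V_s^H=V_s\setminus\{v_j\}$ this gives $v_{j_0}\notin V_s$, and $G_2$ arises from $G_r(n,M)$ with kept sporadics $(V_s\setminus\{v_j\})\cup\{v_{j_0}\}$ and unchanged $Y$-set $Y'$. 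In both cases the total number of kept vertices outside the Tur\'an core is $n-(r-1)m$, so $G_2\in\Turm{r-1}{n}{m}$. I expect the main obstacle to be the size analysis in the third step: $|N|=n-m-1$ matches $|V(H)\setminus V_{j_0}|$ precisely when $v_{j_0}\notin V_s^H$, which is exactly what simultaneously pins down $N$ and opens the sporadic slot $v_{j_0}$ for $v$ to take over in $G_2$.
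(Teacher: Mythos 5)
Your proof is correct and takes essentially the same route as the paper: a $G_2$-neighbour $w\in M\cap V_i$ of $v$ forces $v$ to miss an entire colour class $V_{j_0}$ (else a $K_r$ through $w$ appears in the complete multipartite neighbourhood structure), and the degree count $n-m-1$ then pins $N_{G_2}(v)$ down to exactly the neighbourhood of the sporadic vertex $v_{j_0}$, whose slot must be vacant. The only (harmless) divergences are that you rule out the slot being already occupied via the same size count (since $N_{G_2}(v)$ must avoid all of $V_{j_0}\cup\{v_{j_0}\}$), where the paper instead exhibits an explicit $K_r$ through $v$, the hypothetical second sporadic vertex and $w$, and that your Case~A/Case~B bookkeeping of what $v$ was in $G_1$ makes explicit an accounting step the paper leaves implicit.
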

\begin{proof}
  Recall that since $G_1$ is in $\Turm{r-1}{n}{m}$, it contains a copy of the graph
  $\Tur{r-1}{(r-1)m}$ with colour classes $V_1,\ldots,V_{r-1}$ which covers
  $M$, but which does not cover~$v$ because each of its vertices is either in or adjacent
  to~$M$ in~$G_1$. The same sets $V_1,\ldots,V_{r-1}$ continue to induce a
  copy of $\Tur{r-1}{(r-1)m}$ in $G_2$. Since $v$ has at least one
  $G_2$-neighbour in $M$, we can let $w_i$ be a neighbour of $v$ in $M\cap
  V_i$ for some $i$. If $v$ is adjacent to at least one vertex of each set
  $V_1,\ldots,V_{r-1}$, then letting~$w_j$ be a neighbour of $v$ in $V_j$
  for each $j\neq i$, we obtain a copy of $K_r$ in~$G_2$ intersecting $M$,
  which is a contradiction. Thus there is $j$ such that $v$ has no
  neighbours in $V_j$, and since $v$ has degree $n-m-1$ it follows that the
  neighbourhood of $v$ is precisely $V(G_1)\setminus\big(V_j\cup\{v\}\big)$. In
  other words, $v$ has the same neighbourhood as a sporadic vertex in our
  construction, and we need only to show that there is no second vertex
  $v'\neq v$ with neighbourhood $V(G_1)\setminus\big(V_j\cup\{v'\}\big)$. If
  such a vertex existed, then $v,v'$ and $w_i$ together with one
  vertex in each set $V_\ell$ with $\ell\not\in\{i,j\}$ would form a copy of $K_r$
  intersecting $M$ in $G_2$.
\end{proof}

We can now prove Case~II.

\begin{proof}[Proof of Theorems~\ref{thm:TuranExt} and~\ref{thm:TuranExt_stab}, Case~II]
  Let $G=(V,E)$ and~$M$ satisfy the conditions of the theorems.  First we show
  that $e(G)\le\turm{r-1}{n}{m}$, with equality only for graphs in
  $\Turm{r-1}{n}{m}$, which will prove Theorem~\ref{thm:TuranExt}.
  
  We apply Lemma~\ref{lem:push} to $G$ with $\mu:=0$ to obtain a graph $G'$ on
  $V$ which also has no $K_r$ intersecting $M$, which has $e(G')\ge e(G)$
  with equality only if $G=G'$, and which is such that every vertex $v\in
  V\setminus M$ has $\deg_{G'}(v)\ge n-m-1$. We now apply repeatedly the
  following further transformation to $G'$ to obtain $G''$. If there exists a
  vertex $v$ in $V\setminus M$ whose degree is $n-m-1$ and which has a
  neighbour in $M$, we delete all edges incident to $v$, and insert all edges
  from $v$ to $V\setminus\big(M\cup\{v\}\big)$. Observe that $e(G'')=e(G')$,
  and $G''$ satisfies the conditions of Lemma~\ref{lem:boundX} with $\nu:=0$. It
  follows that the set $X$ of $G''$-neighbours of $M$ in $V\setminus M$ has
  size $|X|\le(r-2)m$. Let $X'$ be a subset of $V\setminus M$ containing $X$
  of size exactly $(r-2)m$.
  
  Since $|X'\cup M|=(r-1)m$, we can now apply
  Theorem~\ref{thm:TuranExt} in Case~I to
  conclude that
  \begin{equation*}
    e\big(G''[X'\cup M]\big)
    \le\turm{r-1}{(r-1)m}{m}
    =\tur{r-1}{(r-1)m}
  \end{equation*}
  with equality only if $G''[X'\cup M]=\Tur{r-1}{(r-1)m}$. Observe that the vertices
  in $V\setminus (X'\cup M)$ are all of degree $n-m-1$ and have no neighbours in
  $M$. It follows that $e(G'')\le\turm{r-1}{n}{m}$, with equality only if
  $G''\in\Turm{r-1}{n}{m}$. Since $e(G)\le e(G')=e(G'')$, we have
  $e(G)\le\turm{r-1}{m}{n}$, with equality only if $G=G'$ and
  $G''\in\Turm{r-1}{n}{m}$. It remains only to show that if
  $e(G)=\turm{r-1}{n}{m}$, then the transformation from $G=G'$ to $G''$
  cannot take a graph outside $\Turm{r-1}{n}{m}$ to a graph in
  $\Turm{r-1}{n}{m}$. Observe that the reverse of this transformation consists
  exactly of steps satisfying Lemma~\ref{lem:Seq}, which therefore asserts that
  since $G''\in\Turm{r-1}{n}{m}$, so $G=G'\in\Turm{r-1}{n}{m}$. This proves
  Theorem~\ref{thm:TuranExt}.
  
  Finally, we prove stability, that is, Theorem~\ref{thm:TuranExt_stab}. Given $\eps>0$, set $\eps':=\eps/2$.
  Let $\gamma'$ be the constant returned by Case I of Theorem~\ref{thm:TuranExt_stab}
  for input $\eps'$ and define
  \begin{equation}\label{eq:stab2:set}
    \nu:=\gamma'\eps'^2/4\,,\qquad 
    \mu:=\nu^2/2 \qquad\text{and}\qquad
    \gamma:=\mu^2/2\,.
  \end{equation}
  Suppose that $e(G)\ge\turm{r-1}{n}{m}-\gamma n^2$. If $\gamma n^2<1$, then
  $e(G)=\turm{r-1}{n}{m}$ and so $G\in\Turm{r-1}{n}{m}$ (and in particular $G$ is
  $\eps$-close to some graph in $\Turm{r-1}{n}{m}$). It follows that we may assume
  $n\ge\gamma^{-1/2}$, and so by~\eqref{eq:stab2:set} that $\mu n\ge 1$.
  
  We apply Lemma~\ref{lem:push} to $G$ to obtain a graph $G'$ in which no copy
  of $K_r$ intersects $M$, with $e(G')\ge e(G)$, and in which every vertex $v\in
  V\setminus M$ has $\deg_{G'}(v)\ge n-m-\mu n-1$. In particular, we have
  $e(G')\le\turm{r-1}{n}{m}$. Since $\mu^2>\gamma$ by~\eqref{eq:stab2:set}, we
  must have $e(G')\le e(G)+\mu^2n^2$, so by conclusion~\ref{lem:push:c3} of
  Lemma~\ref{lem:push} the graph $G'$ is obtained from $G$ by editing at most
  $\mu n^2$ edges.
  
  Now since $\mu n\ge 1$ and by~\eqref{eq:stab2:set}, we have $\deg_{G'}(v)\ge n-m-2\mu
  n=n-m-\nu^2 n$ for each $v\in V\setminus M$. Letting
  $X$ be the vertices in $V\setminus M$ with at least $\nu n$ neighbours in
  $M$, we obtain by Lemma~\ref{lem:boundX} that $|X|\le(1+\nu)(r-2)m$. 

  Let $X'$ be a subset of $V\setminus M$ of size $(r-2)m$ which is either
  contained in $X$ (if $|X|>(r-2)m$) or contains $X$ (if
  $|X|\le(r-2)m$). We obtain a graph $G''$ by deleting all edges from
  $V\setminus (M\cup X')$ to $M$.  Observe that, since $(r-2)m<n$, the
  graph~$G''$ is obtained from $G'$ by deleting at most $(n-m-|X|)\nu
  n+\nu(r-2)m^2\le 2\nu n^2$ edges, and therefore has $e(G'')\ge e(G')-2\nu
  n^2\ge\turm{r-1}{n}{m}-\gamma n^2-2\nu n^2$ edges. Furthermore, no copy of
  $K_r$ in $G''$ intersects $M$.
  
  Let $H=G''[X'\cup M]$.
  Since there are no edges in $G''$ between $V\setminus(X'\cup M)$ and $M$,
  we have 
  \begin{equation*}\begin{split}
    e(G'')
    &= e\big(G''[V\setminus (X'\cup M)]\big) + e\big(G''\big[V\setminus(X'\cup
    M),X'\cup M\big]\big) + e(H) \\ &\le
    \binom{n-(r-1)m}{2}+\big(n-(r-1)m\big)(r-2)m+e(H)\,.
  \end{split}\end{equation*}
 Thus $e(H)\ge
  \tur{r-1}{(r-1)m}-\gamma n^2-2\nu n^2$. Furthermore,
  by Case I of Theorem~\ref{thm:TuranExt}\ref{thm:TuranExt:bound},
  $e(H)\le\tur{r-1}{(r-1)m}$.
  
  We distinguish two cases. First, $(r-1)m\ge \eps'n$. In this case, we have
  \[
  e(H)
  \ge\tur{r-1}{(r-1)m}-\frac{\gamma+2\nu}{\eps'^{2}}(r-1)^2m^2
  \geByRef{eq:stab2:set}\tur{r-1}{(r-1)m}-\gamma'(r-1)^2m^2\,.
  \]
  We apply Case~I of Theorem~\ref{thm:TuranExt_stab} to $H$
  with $\gamma'$ and $\eps'$, to obtain that $H$ is $\eps'$-close to
  $\Tur{r-1}{(r-1)m}$. 
  Second, $(r-1)m<\eps'n$. In this case, we have
  $\binom{(r-1)m}{2}<\eps'n^2$. 

  We can thus, in either case, edit at most
  $\eps'n^2$ edges of $G''$ to obtain a graph $G'''$ in which $G'''[X'\cup
  M]$ is a copy of $\Tur{r-1}{(r-1)m}$.
  Clearly, $G'''$ is a subgraph of a graph in
  $\Turm{r-1}{n}{m}$ (without sporadic vertices), and $e(G''')\ge e(G'')\ge\turm{r-1}{n}{m}-\gamma n^2-2\nu
  n^2$. It follows that we can add at most $\gamma n^2+2\nu n^2$ edges to
  $G'''$ to obtain a graph~$T$ in $\Turm{r-1}{n}{m}$. In total, we have
  made \[\mu n^2+2\nu n^2+\eps'n^2+\gamma n^2+2\nu
  n^2\leByRef{eq:stab2:set}\eps n^2\] edits from~$G$ to~$T$, and have
  preserved the property that no copy of $K_r$ intersects $M$.
\end{proof}

\section{Concluding remarks}
In our main results, we consider forbidden copies of $K_r$ that
intersect~$M$. An obvious extension would be to forbid copies of $K_r$ that intersect~$M$ in at least~$s$ vertices. We suspect that, at least for small~$s$, similar methods to those used here might give corresponding results also for this setting.
  
Another possible direction of extending Theorem~\ref{thm:TuranExt} is to forbid a general fixed $r$-partite graph~$H$, instead of~$K_r$, to touch the set $M$. The standard regularity method allows one to deduce that the upper bound from Theorem~\ref{thm:TuranExt} holds even in this case, up to an additive $o(n^2)$ term. The Tur\'an graph provides an almost matching lower bound in Case~I. The regularity method proves the corresponding counterpart to Theorem~\ref{thm:TuranExt_stab} in Case~I as well. In Case~II, however, the graphs in $\Turm{r-1}{n}{m}$ do not necessarily provide a lower bound. For example, each of the graphs in $\Turm{2}{n}{m}$ contains a copy of $C_5$ touching the set $M$. It would be interesting to determine the true extremal results in such cases.

Finally, one could ask for a stronger stability result in the spirit of~\cite{NikiPlacate1}. That is, we want to prove that if $e(G)>\tur{r-1}{n}-o(n^2)$ then after deleting $o(n)$ we get a subgraph of a graph from $\Turm{r-1}{n}{m}$. This can be obtained easily from Theorem~\ref{thm:TuranExt_stab} as follows. We take the graph $G'$ on the vertex set $V(G)$ in $\Turm{r-1}{n}{m}$ with edit distance less than $\eps n^2$ to $G$ guaranteed by Theorem~\ref{thm:TuranExt_stab}. We now remove from $V(G)$ all vertices whose neighbourhoods in $G$ and $G'$ do not have symmetric difference less than $2\sqrt{\eps}n$. Because $G$ and $G'$ are close in edit distance we remove at most $\sqrt{\eps}n$ vertices. We further remove vertices $V(G)$ that are either sporadic vertices of $G'$, or that lie in a set $V_i\cap Y$ or $V_i\setminus Y$ of size less than $4r\sqrt{\eps}n$ to obtain the vertex set $V'$, with $|V'|\ge \big(1-10r^2\sqrt{\eps}\big)n$. It is now easy to check that if $G[V']$ is not a subgraph of $G'[V']$ then there is a copy of $K_r$ in $G[V']$ touching $M$, a contradiction.

\section{Acknowledgements}
This paper was finalised during the Midsummer Combinatorial Workshop 2014 in Prague. We would like to thank the organisers for their hospitality.


\bibliographystyle{amsplain_yk} 
\bibliography{bibl}

\end{document}